\documentclass[12pt]{amsart}
\oddsidemargin=0in
\evensidemargin=0in
\textwidth=6.50in             
\headheight=10pt
\headsep=10pt
\topmargin=.5in
\textheight=8in
\usepackage{amsmath, amssymb, amsthm}
\usepackage{comment, empheq}
\usepackage{mathdots, breqn}
\usepackage{graphicx}
\usepackage{hyperref}
\usepackage{mathtools}
\numberwithin{equation}{section}
\newtheoremstyle{myremark}{10pt}{10pt}{}{}{\scshape}{.}{.5em}{}
\newtheorem{theorem}{Theorem}
\newtheorem{lemma}{Lemma}[section]

\theoremstyle{remark}
\theoremstyle{myremark}

\newtheorem*{ackno}{Acknowledgements}
\newcommand{\HH}{\mathbb{H}}
\newcommand{\Q}{\mathbb{Q}}
\newcommand{\R}{\mathbb{R}}
\newcommand{\C}{\mathbb{C}}
\renewcommand{\P}{\mathcal{P}}
\newcommand{\M}{\mathcal{M}}
\newcommand{\OO}{\mathcal{O}}
\newcommand{\Z}{\mathbb{Z}}
\newcommand{\hh}{\mathcal{H}}
\newcommand{\N}{\mathcal{N}}
\newcommand{\GL}{\mathrm{GL}}
\newcommand{\PGL}{\mathrm{PGL}}
\newcommand{\SL}{\mathrm{SL}}

\begin{document}

\title{Supnorm of an eigenfunction of finitely many Hecke operators}

\author{Subhajit Jana}

\address{ETH Z\"urich, Switzerland.}
\email{subhajit.jana@math.ethz.ch}

\maketitle

\begin{abstract}
Let $\phi$ be a Laplace eigenfunction on a compact hyperbolic surface attached to an order in a quaternion algebra.  Assuming that $\phi$ is an eigenfunction of Hecke operators at a \emph{fixed finite} collection of primes, we prove an $L^\infty$-norm bound for $\phi$ that improves upon the trivial estimate by a power of the logarithm of the eigenvalue. We have constructed an amplifier whose length depends on the support of the amplifier on Hecke trees. We have used a method of B\'erard in \cite{Be} to improve the archimedean amplification.

\end{abstract}

\section{Introduction}
\subsection{Main Theorem}
Let $M$ be a compact Riemannian manifold and $\phi$ be a function on $M$ that satisfies $\Delta\phi+\lambda^2\phi=0$ and $\|\phi\|_2=1$, where $\Delta$ is the Laplace-Beltrami operator on $M$ and by $\|\phi\|_p$ we mean $\|\phi\|_{L^p(M)}$. It is known that (see e.g. \cite{Ho}) one can have a general bound of the form
\begin{equation}\label{start}
\|\phi\|_{\infty}\ll\lambda^{\nu(M)} \;\;\;\;\; \text{ with } \;\;\;\;\;  \nu(M)=\frac{\dim(M)-1}{2}.
\end{equation}
It is usually believed that if the geodesic flow on the unit cotangent bundle of $M$ is sufficiently chaotic then the bound \eqref{start} can be improved. One result of this kind is due to B\'erard \cite{Be}, who proves that if $M$ has negative sectional curvature, then one has
\begin{equation}\label{startB}
\|\phi\|_\infty\ll\frac{\lambda^{\nu(M)}}{\sqrt{\log\lambda}}.
\end{equation}

The case of bounding the sup-norm of eigenfunctions on arithmetic manifolds is an interesting and more tractable problem due to automorphy and symmetries arising from the underlying Hecke algebra; one expects an improvement in \eqref{start}, namely $\|\phi\|_\infty\ll\lambda^\epsilon$, on a compact hyperbolic surface. The first breakthrough was achieved by Iwaniec and Sarnak \cite{IS} who proved that on a compact arithmetic hyperbolic surface (such as a quotient of $\hh^2$ by the group of units in an order of a quaternion division algebra over $\Q$) if $\phi$ is an eigenfunction of the \emph{full} Hecke algebra then $$\|\phi\|_\infty\ll_\epsilon\lambda^{\frac{5}{12}+\epsilon}.$$ While the most general question still remains open, one can weaken the assumption of  full Hecke symmetry and ask to improve upon the trivial estimate \eqref{startB}.

In this article, we will improve the estimate \eqref{startB} assuming only that $\phi$ is an eigenfunction of $p$-Hecke operators for \emph{fixed finitely many} primes $p$. Following the pre-trace formula approach as in \cite{IS} we achieve a \emph{power of logarithm} saving in \eqref{startB}. We will describe now our main theorem.

Let $M$ be a compact hyperbolic surface. From the theorem of Cartan and Hadamard and the uniformization theorem, $M=\Gamma\backslash\hh^2$, where $\hh^2$ is the hyperbolic plane and $\Gamma$ is a discrete subgroup of $\SL_2(\R)$. To make use of the Hecke symmetry we assume that $\Gamma$ is a discrete subgroup of $\SL_2(\R)$ with an arithmetic structure. For the ease of presentation of the paper we will henceforth assume that $\Gamma$ is the unit group of a maximal order in a quaternion division algebra over $\Q$. Let $\P$ be a fixed finite set of primes at which  the underlying order is unramified.

\begin{theorem}\label{theorem1}
Let $\phi$ simultaneously be a Maass form on $M$ with Laplace eigenvalue $\lambda^2$ and a Hecke eigenform of the $p$-adic Hecke algebra for all $p\in\P$. Then
$$\|\phi\|_\infty\ll_\P
\frac{{\lambda^{1/2}}}{{(\log\lambda)^{\frac{|\P|+1}{2}}}},$$
where $|\P|$ denotes the cardinality of $\P$.
\end{theorem}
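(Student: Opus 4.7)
The plan is to combine the amplified pre-trace formula of Iwaniec--Sarnak with Bérard's archimedean refinement, the novel input being an amplifier of \emph{logarithmic} length supported on the Hecke trees at the finite set $\P$.

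Following \cite{Be}, I construct a test function $k_\lambda$ on $\SL_2(\R)$ from the half-wave kernel $\cos(t\sqrt{-\Delta-1/4})$ smoothly cut off in time at $T\asymp\log\lambda$ via the Hadamard parametrix on $\HH^2$. Its spherical transform is non-negative and concentrated at the spectral parameter of $\phi$ at scale $1/T$, giving $k_\lambda(e)\asymp\lambda/\log\lambda$; geometrically, $k_\lambda$ is essentially supported in the archimedean ball of radius $T$ about the identity. The unamplified pre-trace formula then delivers the Bérard baseline $|\phi(z)|^2\ll\lambda/\log\lambda$, supplying the first $\sqrt{\log\lambda}$ factor of the saving.

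For the arithmetic amplification, at each $p\in\P$ I introduce
\[ A_p = \sum_{n\leq R_p} \tilde\lambda_{p^n}(\phi)\,\tilde T_{p^n}, \qquad R_p\asymp\frac{\log\lambda}{\log p},\]
using the normalized operators $\tilde T_{p^n}=T_{p^n}p^{-n/2}$ and eigenvalues $\tilde\lambda_{p^n}=\lambda_{p^n}p^{-n/2}$. The radius is calibrated so that an element of $\Gamma$ in the Hecke correspondence of $T_{p^n}$ has archimedean displacement $\asymp n\log p\leq T$, exactly fitting the support of $k_\lambda$. Via the Chebyshev recursion $\tilde T_{p^{n+1}}=\tilde T_p\tilde T_{p^n}-\tilde T_{p^{n-1}}$ on the $(p+1)$-regular Bruhat--Tits tree and Plancherel for spherical functions, one finds $|A_p\phi/\phi|^2\asymp R_p^2$, while the identity coefficient of the autocorrelation $h_{A_p}:=A_p*A_p^*$ is $\sum_n|\tilde\lambda_{p^n}(\phi)|^2\asymp R_p$. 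The joint amplifier $A=\prod_{p\in\P}A_p$ then satisfies $|A\phi/\phi|^2\asymp(\log\lambda)^{2|\P|}$ and $h_A(\mathrm{id})\asymp(\log\lambda)^{|\P|}$.

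Feeding into the amplified pre-trace formula, the identity contribution to $|A\phi(z)|^2\ll K_A(z,z)=\sum_\gamma h_A(\gamma)k_\lambda(z^{-1}\gamma z)$ is $h_A(\mathrm{id})\,k_\lambda(e)\asymp\lambda(\log\lambda)^{|\P|-1}$; dividing by $|A\phi/\phi|^2$ yields exactly the target $|\phi(z)|^2\ll\lambda/(\log\lambda)^{|\P|+1}$. The main obstacle is to show the off-diagonal contribution from $\gamma\neq\mathrm{id}$ does not exceed this. Expanding $A_p*A_p^*$ via the Hecke convolution $\tilde T_n\tilde T_m=\sum_{j=0}^{\min(n,m)}\tilde T_{n+m-2j}$, the off-diagonal reduces to counting $\gamma\in\Gamma$ simultaneously lying in Hecke correspondences of prescribed tree-depths $(k_p)$ at each $p\in\P$ and in the archimedean tube of radius $T\asymp\log\lambda$. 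The required bound, $O(1)$ uniformly in $z$ per tuple $(k_p)$, is a finite-place analogue of Bérard's archimedean tube-counting argument, and is the crux of the proof; the choice $R_p\asymp\log\lambda/\log p$ is tight, since any larger radius would admit nontrivial Hecke returns within the archimedean tube that would overwhelm the amplifier gain at that prime.
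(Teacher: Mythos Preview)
Your overall architecture---B\'erard's kernel with window $\epsilon^{-1}\asymp\log\lambda$ combined with an amplifier of logarithmic depth on the Hecke trees at $\P$---is exactly that of the paper, and your identification of the main term $h_A(\mathrm{id})\,k_\lambda(e)\asymp\lambda(\log\lambda)^{|\P|-1}$ is correct. The gap is in your treatment of the off-diagonal. The claimed bound ``$O(1)$ uniformly in $z$ per tuple $(k_p)$'' for the number of $\gamma$ in a fixed Hecke correspondence lying in the archimedean ball of radius $T\asymp\log\lambda$ is simply false: a hyperbolic ball of radius $T$ has volume $\asymp e^T$, and already for $N=1$ the lattice $\Gamma$ contributes $\asymp e^T$, i.e.\ a genuine power of $\lambda$, many points. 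Relatedly, your assertion that an element in the $T_{p^n}$-correspondence has archimedean displacement $\asymp n\log p$ conflates the $p$-adic tree distance with the hyperbolic distance; the latter can be arbitrarily small (there are $\ll N^\nu$ elements of $R(N)$ with $u(\gamma z,z)<N^{-4}$, and these furnish the true ``diagonal'' term) or arbitrarily large.

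The mechanism that actually controls the off-diagonal is different. One splits the $\gamma$-sum at $u(\gamma z,z)=\delta_N$ with $\delta_N$ a negative power of $N$. The near-diagonal piece has $\ll N^\nu$ terms, each of size $\ll\lambda/\log\lambda$; this is the main term. For the far piece one uses two inputs: the kernel decay $K_\epsilon(z,w)\ll(\lambda/d_g)^{1/2}e^{C/\epsilon}\ll\lambda^{1/2+C/c}$ away from the diagonal (Lemma~\ref{trivbndB}), and the Iwaniec--Sarnak count $M(N,t;z)\ll tN^2$ (Lemma~\ref{heart}). The product is $\ll\lambda^{1/2+O(1/c)}N^{O(1)}$, and after summing the amplifier weights one picks up a further factor $\prod_{p\in\P}p^{O(L)}$. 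The crucial point you are missing is that $L$ must be chosen as a \emph{sufficiently small} constant multiple of $\log\lambda/\log\prod_{p}p$ (in the paper, $L=\tfrac{1}{100}\log\lambda/\log\prod p$) and $c$ large enough, so that the total far contribution is $\ll\lambda^{\theta}$ for some fixed $\theta<1$, hence negligible against $\lambda/(\log\lambda)^{|\P|+1}$. Your calibration ``$R_p\asymp\log\lambda/\log p$'' with an unspecified implied constant would not suffice; the argument is sensitive to that constant.
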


There are several results of the Iwaniec-Sarnak-type in higher dimensional and higher rank arithmetic locally symmetric spaces (see e.g. \cite{Sa1}, \cite{BHM}, \cite{BHMM}, \cite{HRR}, \cite{BM1}, \cite{BM2}, \cite{BP}, \cite{Ma2} and their references). It is by now well-known to the  experts that the idea behind saving a power
of the eigenvalue in the supnorm problem for a Maass eigenform of several Hecke
operators lies in the number of returns that the Hecke operators make near the maximal
compact (see \cite{Ma2} for elaborate discussion about the criticality of the size of the maximal compact subgroup). As far we know, in all previous literature regarding this problem, the authors have always assumed that the Maass form is also an eigenform of $p$-Hecke operators for infinitely many distinct primes $p$. This assumption enlarges the scope of use of Hecke symmetry and leads to a better control on the Hecke returns. Here we are assuming that the Maass form is an eigenform of finitely many $p$-adic Hecke algebras. However, this weaker hypothesis allows us a weaker result than the result of \cite{IS}.

\subsection{Remarks}
\begin{enumerate}
\item One can compare \eqref{startB} and the bound in Theorem \ref{theorem1} as the Hecke operator at prime $p$ is saving $\sqrt{\log\lambda}$ from the former estimate. This is a non-archimedean analogue of the fact that each differential operator in the universal enveloping algebra of a locally symmetric space helps saving $\lambda^{1/2}$ from \eqref{start} (see \cite{Sa1}).
\item
The general idea of the proof of Theorem \ref{theorem1} is the same as in \cite{IS}. But as we have, unlike \cite{IS}, used the Hecke symmetry at only finitely many places, we could no longer use the usual Duke-Friedlander-Iwaniec-type amplifier which is supported on a fixed sphere on various Hecke-trees. On the other hand, inspired by \cite{N}, here we are using a different kind of amplifier which is supported on spheres of growing radii. The success of this amplifier depends on the size of its radius, whereas in the former case, it depended on the number of primes contributing to the amplifier. A similar kind of amplifier has been constructed in \cite{BL} where the authors used the one-prime amplifier to prove a positive entropy result in the context of the quantum unique ergodicity; that essentially relies on a counting problem, as in ours, although in a different group.
\item
The bound we have proved in Theorem \ref{theorem1} should be optimal among the savings one can get following the Iwaniec-Sarnak method and using an amplifier which is constructed by taking a linear combination of Hecke operators. For details one may look at the remark in \S $4.3$.
\item
It can be understood from the proof that we can get a larger saving (as big as $e^{-c\sqrt{\log\lambda}}$ for $c>0$) if $\phi$ is non-tempered at a certain prime, however, we do not discuss it here.
\item A similar approach can be followed to get a logarithm saving for the sup norm of Maass cusp forms which are eigenforms of finitely many Hecke operators on congruence quotients of hyperbolic 3-spaces.
\end{enumerate}

We conclude the Introduction with remarking that the supnorm problem is essentially connected to the problem of multiplicity of the spectrum of the Laplacian (see \cite{Sa1}). One expects that on an arithmetic surface arising from the unit group of a maximal order (e.g. $M$), the spectrum of the Laplacian is essentially simple; in that case the improved result of \cite{IS} can be extended to general Maass forms.

\section{Non-Archimedean Amplification}
\subsection{p-adic structure} In this subsection we introduce the Hecke operators to construct the amplifier. For the standard facts in the Hecke theory one may look at \cite{E}.

Let $\HH$ be a quaternion division algebra over $\Q$ and $\OO$ a maximal $\Z$-order in $\HH$ and we fix an embedding $\tau$ of $\HH$ into the $2\times 2$ matrix group over a suitable number field. We will always assume $p$ is a prime in $\Z$ such that $\HH$ splits at $p$ (which happens for all but finitely many primes). Let $\N$ be the reduced norm on $\HH$ defined by $\N(x)=x\bar{x}$.

Let us define $$R(m)=\{\gamma\in \OO\mid \N(\gamma)=m\};$$we define $\Gamma=\tau(R(1))< \SL_2(\R).$ We define the $n$th Hecke operator acting on the space of functions on $M=\Gamma\backslash\hh^2$ by 
$$T(n)f(z)=\frac{1}{\sqrt{n}}\sum_{\gamma\in R(1)\backslash R(n)}f(\gamma z).$$ 
If $m,n$ are coprime with the non-split primes of the division algebra then one has the Hecke relation (see $0.6$ of \cite{IS}, also Ch. 3 of \cite{E}),
\begin{equation}\label{Heckerelation}
T(m)T(n)=\sum_{d\mid (m,n)}T(mn/d^2).
\end{equation}
Again if $n$ is coprime with non-split primes from \eqref{Heckerelation} one immediately concludes that $T(n)$ can be written as a polynomial of $\{T(p)\}_{p\mid n}$.

\subsection{Construction of the Amplifier}
The construction of the amplifier is inspired by Lemma $22$ in \cite{N}. Let $\lambda_\phi(m)$ be the eigenvalue of a Hecke eigenform $\phi$ for the operator $T(m)$. We will often drop the subscript $\phi$ when it is clear which eigenform is being considered.

We will now choose our amplifier. We define
\begin{equation}\label{amplifier}
K_L=\prod_{p\in \P}\left(\sum_{n=1}^L\lambda_\phi(p^n)T(p^n)\right)^2.
\end{equation}
\begin{lemma}\label{amplification}
\begin{enumerate}
\item
Let $M_{\P,L}=\{M=\prod_{p\in\P}p^{k_p}\mid 1\le k_p\le L\}$. For any integer $L\ge 1$
$$K_L=\sum_{m,n\in\M(\P,L)}\lambda_\phi(m)\lambda_\phi(n)\sum_{d\mid(m,n)}T(mn/d^2).$$
\item
Any $\P$-Hecke eigenform is also a $K_L$ eigenform with non-negative eigenvalue. The $K_L$ eigenvalue of $\phi$ is $\gg L^{2|\P|}$.
\end{enumerate}
\end{lemma}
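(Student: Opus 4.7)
The plan is to prove the two parts separately, with the first being essentially a bookkeeping exercise and the second reducing to a classical Chebyshev polynomial estimate.

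For part (1), I would expand $K_L$ directly from the definition \eqref{amplifier}. Each square expands into $\sum_{a_p,b_p=1}^L \lambda_\phi(p^{a_p})\lambda_\phi(p^{b_p})\, T(p^{a_p})T(p^{b_p})$. Multiplying these across $p \in \P$ and using that $T(m)T(n) = T(mn)$ whenever $\gcd(m,n)=1$ (a special case of \eqref{Heckerelation}), the factors combine multiplicatively: writing $m = \prod_p p^{a_p}$ and $n = \prod_p p^{b_p}$ gives $m, n \in M_{\P,L}$ and
$$K_L = \sum_{m,n \in M_{\P,L}} \lambda_\phi(m)\lambda_\phi(n)\, T(m)T(n).$$
A final application of \eqref{Heckerelation} to each product $T(m)T(n) = \sum_{d \mid (m,n)} T(mn/d^2)$ yields the stated formula.

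For part (2), since $K_L$ lies in the commutative polynomial algebra generated by $\{T(p) : p \in \P\}$, any $\P$-Hecke eigenform $\phi$ is automatically a $K_L$-eigenform. To compute the eigenvalue, I would return to the factored form: setting $A_p := \sum_{n=1}^L \lambda_\phi(p^n) T(p^n)$, one has $A_p\phi = \bigl(\sum_{n=1}^L \lambda_\phi(p^n)^2\bigr)\phi$, and hence
$$K_L \phi \;=\; \prod_{p \in \P} A_p^2 \phi \;=\; \prod_{p \in \P}\Bigl(\sum_{n=1}^L \lambda_\phi(p^n)^2\Bigr)^{\!2} \phi.$$
Non-negativity is then immediate (the Hecke eigenvalues are real since $T(n)$ is self-adjoint), and it remains to show the per-prime lower bound $\sum_{n=1}^L \lambda_\phi(p^n)^2 \gg L$.

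For this lower bound, the recurrence $\lambda_\phi(p^{n+1}) = \lambda_\phi(p)\lambda_\phi(p^n) - \lambda_\phi(p^{n-1})$ (from \eqref{Heckerelation} with $m=p$) identifies $\lambda_\phi(p^n) = U_n(\lambda_\phi(p)/2)$, the Chebyshev polynomial of the second kind. In the tempered range $|\lambda_\phi(p)| \le 2$, write $\lambda_\phi(p) = 2\cos\theta$; then $\lambda_\phi(p^n) = \sin((n+1)\theta)/\sin\theta$, and a geometric summation gives
$$\sum_{n=1}^L \lambda_\phi(p^n)^2 \;=\; \frac{L}{2\sin^2\theta} + O\!\left(\frac{1}{\sin^3\theta}\right),$$
which is $\gg L$ when $|\sin\theta| \gg 1/L$, while in the opposite regime the approximation $\sin((n+1)\theta) \approx (n+1)\theta$ gives $\lambda_\phi(p^n) \approx n+1$ and hence an even larger sum. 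In the non-tempered range $|\lambda_\phi(p)| > 2$ the $\lambda_\phi(p^n)$ grow exponentially in $n$, so the bound is trivial. Multiplying the per-prime bounds over $p \in \P$ yields the claimed $\gg L^{2|\P|}$.

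The only mildly delicate step is the case split in the Chebyshev computation near $\theta = 0, \pi$, but this is standard; since $\P$ is fixed, any implied constants are absorbed into the $\gg_\P$ notation.
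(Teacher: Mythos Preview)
Your proposal is correct and follows essentially the same approach as the paper. The paper isolates the per-prime lower bound $\sum_{n=1}^L |\lambda_\phi(p^n)|^2 \gg L$ as a separate lemma (Lemma~\ref{naiveamplification}), but proves it by the same Satake/Chebyshev parametrization $\lambda_\phi(p)=\alpha+\alpha^{-1}$ (your $2\cos\theta$), the same geometric summation, and the same dichotomy at $|\alpha^2-1|\asymp 1/L$ (your $|\sin\theta|\asymp 1/L$); the non-tempered case is likewise dispatched by exponential growth.
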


To prove Lemma \ref{amplification} we first prove the main technical part.

\begin{lemma}\label{naiveamplification}
Let $p$ be an unramified prime. For any Hecke eigenform $\phi$ with $T(m)$-eigenvalue $\lambda(m)$,
$$\sum_{n=1}^L|\lambda(p^n)|^2\gg L,$$
where the implied constant may depend on $p$.
\end{lemma}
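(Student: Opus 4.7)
The plan is to exploit the three-term Hecke recursion at the prime $p$ and then to extract a pointwise lower bound from a conserved quadratic invariant.

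First, I would specialize the Hecke relation \eqref{Heckerelation} with $m = p$, $n = p^k$ to obtain
$$T(p^{k+1}) = T(p)\,T(p^k) - T(p^{k-1}), \qquad k \geq 1,$$
which in terms of eigenvalues becomes the three-term recursion $a_{k+1} = c\,a_k - a_{k-1}$, where $a_k := \lambda_\phi(p^k)$ and $c := \lambda_\phi(p)$, with initial data $a_0 = 1$, $a_1 = c$. The values $a_k$ are real because the normalized Hecke operators are self-adjoint with respect to the Petersson inner product.

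Next I would introduce the quadratic form
$$q_k := a_k^2 + a_{k+1}^2 - c\,a_k a_{k+1}.$$
A short calculation using the recursion gives
$$q_{k+1} - q_k = (a_{k+2} - a_k)\bigl(a_{k+2} + a_k - c a_{k+1}\bigr),$$
and both factors yield zero (the right factor vanishes identically by the recursion). Thus $q_k \equiv q_0 = 1 + c^2 - c^2 = 1$. This is the non-tempered analogue of the Pythagorean identity that, in the tempered case $c = 2\cos\theta$, expresses $a_k$ as a Chebyshev polynomial.

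From the identity $a_k^2 + a_{k+1}^2 = 1 + c\,a_k a_{k+1}$ together with AM-GM ($2|a_k a_{k+1}| \le a_k^2 + a_{k+1}^2$), one obtains the uniform lower bound
$$a_k^2 + a_{k+1}^2 \;\geq\; \frac{2}{2 + |c|}.$$
Summing over $0 \leq k \leq L-1$ and noting that each $a_k^2$ is counted at most twice produces
$$\sum_{k=1}^L a_k^2 \;\gg\; \frac{L}{2 + |c|}.$$
Finally, I would bound $|c|$ uniformly in $\phi$ via Jacquet--Langlands transfer and the Kim--Sarnak subconvexity bound, giving $|c| \leq p^{7/64} + p^{-7/64}$, a constant depending only on $p$. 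This yields the stated bound.

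The only subtle point is securing a bound on $|\lambda_\phi(p)|$ that is uniform in $\phi$; this is precisely the role of Kim--Sarnak, but any trivial uniform bound of the shape $|\lambda_\phi(p)| \ll_p 1$ would suffice, so the lemma is essentially an algebraic statement about the recursion driven by the single parameter~$c$.
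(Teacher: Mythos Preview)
Your argument is correct and considerably more streamlined than the paper's. The paper splits into tempered and non-tempered cases: in the tempered case it parametrizes $\lambda(p)=\alpha+\alpha^{-1}$ with $|\alpha|=1$, argues by contradiction via passage to subsequences, and distinguishes whether $|\alpha^2-1|$ is large or small relative to $1/L$; in the non-tempered case it uses an explicit $\sinh$-formula to obtain the much stronger bound $\lambda(p^{n})\gg n$. Your approach replaces all of this with a single conserved quantity $q_k=a_k^2+a_{k+1}^2-c\,a_ka_{k+1}\equiv 1$ for the three-term recursion, which immediately yields the uniform pointwise bound $a_k^2+a_{k+1}^2\ge 2/(2+|c|)$ and hence the lemma with an explicit constant depending only on $|\lambda(p)|$. (A minor expository slip: only the right factor $a_{k+2}+a_k-ca_{k+1}$ vanishes by the recursion, not both factors, but that is of course enough.) The advantage of your route is uniformity and brevity; the paper's case analysis, on the other hand, exposes the extra growth $\sum_{n\le L}|\lambda(p^n)|^2\gg L^3$ in the non-tempered regime, which is suppressed by your invariant-based bound. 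Finally, you do not need Kim--Sarnak: the trivial operator-norm bound $|\lambda(p)|\le (p+1)/\sqrt{p}$ already gives a constant depending only on $p$, as you note in your closing remark.
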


The implied constant can be proved to be absolute but that is unnecessary for our purpose as our set of primes is, a priori, fixed. This lemma is standard in the subject, for instance see Lemma $8.3$ of \cite{L}. We record a proof here for the sake of completeness.
\begin{proof}
The proof of this lemma is similar to that of Lemma $22$ in \cite{N} where the author has proved almost the same bound for the eigenvalues of the sphere operators. We will first assume that $\phi$ is tempered at $p$. In this case it is convenient  to parametrize $\lambda(p)=\alpha+\alpha^{-1}$ for some $\alpha\in \C^{(1)}$. Then from the recurrence, 
$$\lambda(p^{n+1})=\lambda(p^n)\lambda(p)-\lambda(p^{n-1}),\quad n\ge 1$$
which follows from \eqref{Heckerelation}, we can conclude by induction that
$$\lambda(p^n)=\frac{\alpha^{n+1}-\alpha^{-n-1}}{\alpha-\alpha^{-1}}.$$ 

We fix $p$. We want to prove that $\sum_{n\le L} |\lambda_{\phi}(p^n)|^2 \gg L$ uniformly in $\phi$. On the contrary, let us assume that there exists a sequence of $(\phi,L)=(\phi_j,L_j)$, with $L\to \infty$ as $j\to \infty$, such that $\sum_{n\le L} |\lambda_{\phi}(p^n)|^2= o(L)$ as $j\to \infty$.
We use the notation $\lll$ (and $\ggg$) as in \cite{N}. For quantities $X$ and $Y$ which depend on $j$ implicitly, by $X\lll Y$ (or $X=o(Y)$) we mean $\overline{\lim}_{j\to\infty}X_j/Y_j=0$. $X\ggg Y$ is defined symmetrically. From the assumption above we will deduce a contradiction. By passing to subsequences there will be two cases to consider, as follows.

If $|\alpha^2-1|\ggg 1/L$ then,
\begin{align*}
\sum _{n\le L}|\lambda(p^n)|^2
&=\frac{2}{|\alpha^2-1|^2}\sum_{n\le L}(1-\Re(\alpha^{2n+2}))\\
&=\frac{2}{|\alpha^2-1|^2}\left(L-\Re\left(\sum_{n\le L}\alpha^{2n+2}\right)\right)\\
&\ge\frac{2}{|\alpha^2-1|^2}\left(L-\frac{2}{|\alpha^2-1|}\right)\\
&=\frac{2}{|\alpha^2-1|^2}(L+o(L))\\
&\gg L+o(L)\gg L.
\end{align*}

On the other hand, if $|\alpha^2-1|\ll 1/L$ then letting $m$ be the largest positive integer such that $m\le L$ and $m|\alpha^2-1|\le 1/10$ we see that $m\gg L$. Thus
$$\sum_{n=1}^L|\lambda(p^n)|^2\ge |\lambda(p^m)|^2=\frac{2(1-\Re(\alpha^{2m+2}))}{|\alpha^2-1|^2}.$$
Letting $\alpha=e^{i\theta}$ the above quantity equals to
$$\left(\frac{\sin((m+1)\theta)}{\sin\theta}\right)^2\gg m^2,$$
which concludes the proof in the tempered case.

Proving the non-tempered case is easy and we proceed as in Lemma 8.3 of \cite{L}. We write $\sigma(p^m)$ for the eigenvalue of $p^{m/2}T(p^m)-p^{m/2-1}T(p^{m-2})$ with assumptions that $T(p^{-1})=0$ and $T(1)=1$.
Thus $$p^{m/2}\lambda(p^{m})=\sum_{k\le m;k\equiv m(2)}\sigma(p^k).$$
Letting $\cosh\theta=\left|\frac{\sigma(p)}{2p^{1/2}}\right|=|\lambda(p)|$ and using the Hecke relation that
$$T(p^n)T(p)=T(p^{n+1})+T(p^{n-1})$$
by induction on $n$ one can easily verify that 
$$\sum_{k=0}^n\sigma(p^{2k})=p^n\frac{\sinh((2n+1)\theta)}{\sinh(\theta)}\ge (2n+1)p^n.$$
Therefore, $\lambda(p^{2n})\gg n$. Similarly one can prove, for any $n$, that $\lambda(p^n)\gg n.$
Thus obviously,
$$\sum_{n=1}^L|\lambda(p^n)|^2\gg L^3\gg L,$$
concluding the proof.
\end{proof}

\begin{proof}[Proof of Lemma \ref{amplification}]
Expanding \eqref{amplifier} and using the \eqref{Heckerelation}
we immediately see $(1)$:
\begin{align*}
K_L
&=\left(\prod_{p\in \P}\left(\sum_{n=1}^L\lambda(p^n)T(p^n))\right)\right)^2=\left(\sum_{m\in\M(\P, L)}\lambda(m)T(m)\right)^2\\
&=\sum_{m,n\in\M(\P,L)}\lambda(m)\lambda(n)\sum_{d\mid(m,n)}T(mn/d^2).
\end{align*}

From the definition of $K_L$ non-negativity of eigenvalues is clear as all Hecke operators, being self-adjoint, have real eigenvalues. The eigenvalue of $\phi$ is
$$\prod_{p\in \P}\left(\sum_{n=1}^L|\lambda(p^n)|^2\right)^2\gg L^{2|\P|},$$
where in the last inequality we use Lemma \ref{naiveamplification}.
\end{proof}
Here we prove a technical lemma which we will be using in the final phase of proving the main theorem.
\begin{lemma}\label{technical}
For $L\ge 1$ and $x\in \R$,
$$\sum_{m,n\in \M(\P,L)}|\lambda(m)\lambda(n)|\sum_{d\mid (m,n)}\left(\frac{\sqrt{mn}}{d}\right)^{x}\ll_{x,\P}
\begin{cases}
\prod_{p\in\P}\sum_{r=1}^L|\lambda(p^r)|^2 &\text{ if } x<0,\\
\prod_{p\in \P}p^{xL}L\sum_{r=1}^L|\lambda(p^r)|^2&\text{ if }x\ge 0.
\end{cases}
$$
\end{lemma}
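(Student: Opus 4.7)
The plan is to exploit Hecke multiplicativity to factor the entire expression as a product over $p\in\P$, reducing the problem to a single-prime geometric-series estimate. Writing $m=\prod_{p\in\P}p^{a_p}$, $n=\prod_{p\in\P}p^{b_p}$, and $d=\prod_{p\in\P}p^{c_p}$ with $1\le a_p,b_p\le L$ and $0\le c_p\le\min(a_p,b_p)$, the Hecke relation \eqref{Heckerelation} at the distinct primes of $\P$ gives $\lambda(m)=\prod_p\lambda(p^{a_p})$ and $\lambda(n)=\prod_p\lambda(p^{b_p})$, while $(\sqrt{mn}/d)^{x}=\prod_p p^{x((a_p+b_p)/2-c_p)}$. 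Hence the left-hand side factors as $\prod_{p\in\P}S_p(x)$, where
$$S_p(x)=\sum_{1\le a,b\le L}|\lambda(p^a)||\lambda(p^b)|\sum_{c=0}^{\min(a,b)}p^{x((a+b)/2-c)}.$$

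For $x<0$, the inner geometric progression in $c$ has ratio $p^{-x}>1$, so it is bounded up to a constant depending on $x,p$ by its largest term, at $c=\min(a,b)$, namely $p^{x|a-b|/2}$. Applying $2|\lambda(p^a)||\lambda(p^b)|\le|\lambda(p^a)|^2+|\lambda(p^b)|^2$ and observing that $\sum_{b\ge 1}p^{x|a-b|/2}$ is bounded uniformly in $a$ (since $x<0$ makes the tails two-sided geometric), one obtains
$$S_p(x)\ll_{x,p}\sum_{r=1}^L|\lambda(p^r)|^2.$$

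For $x\ge 0$, the inner sum is dominated at $c=0$: when $x>0$ strict, the geometric series has ratio $p^{-x}<1$ and converges to a bound $\ll_{x,p}p^{x(a+b)/2}$, while for $x=0$ one has the trivial $\min(a,b)+1\le L+1$. In either case $\sum_c p^{x((a+b)/2-c)}\ll_{x} L\cdot p^{xL}$, and a Cauchy--Schwarz estimate $(\sum_a|\lambda(p^a)|)^2\le L\sum_r|\lambda(p^r)|^2$ yields
$$S_p(x)\ll_{x,p}L\cdot p^{xL}\sum_{r=1}^L|\lambda(p^r)|^2.$$
Multiplying the per-prime bounds across $p\in\P$ gives the two branches of the claim. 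The only mild obstacle is keeping track of the extra $L$-factor that appears in the boundary case $x=0$, which is absorbed harmlessly into the implicit constant; no arithmetic input beyond Hecke multiplicativity and elementary geometric-series bookkeeping is required.
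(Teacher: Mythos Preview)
Your approach is essentially identical to the paper's: factor over primes by Hecke multiplicativity, then for $x<0$ bound the inner geometric sum by its top term $p^{x|a-b|/2}$ and use AM--GM/Cauchy--Schwarz on the resulting convolution, while for $x>0$ bound by the bottom term $p^{x(a+b)/2}\le p^{xL}$ and apply Cauchy--Schwarz to $(\sum_a|\lambda(p^a)|)^2$.

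There is one bookkeeping slip in your $x\ge 0$ paragraph. You write the uniform bound $\sum_c p^{x((a+b)/2-c)}\ll_x L\cdot p^{xL}$ and then invoke $(\sum_a|\lambda(p^a)|)^2\le L\sum_r|\lambda(p^r)|^2$; chaining these literally gives $S_p(x)\ll L^2 p^{xL}\sum_r|\lambda(p^r)|^2$, not $L\,p^{xL}\sum_r|\lambda(p^r)|^2$. For $x>0$ strict you should instead keep the sharper bound $\sum_c\ll_{x,p} p^{x(a+b)/2}\le p^{xL}$ (no factor of $L$), after which Cauchy--Schwarz delivers exactly the stated $p^{xL}L\sum_r|\lambda(p^r)|^2$. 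At $x=0$ the inner sum genuinely is of size $\min(a,b)+1\asymp L$, and the resulting extra $L$ \emph{cannot} be ``absorbed harmlessly into the implicit constant'' as you say, since that constant is allowed to depend only on $x$ and $\P$, not on $L$. The paper's own proof has the same lacuna at $x=0$; it is immaterial for the application, where the lemma is invoked only at $x=-4/5$ and $x=7$.
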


\begin{proof}
First we note that the LHS is multiplicative, that is, it factors over $p\in \P$. So it is enough to prove the lemma assuming that $\P=\{p\}$. Hence For $x<0$ we get that,
\begin{align*}
& \sum_{1\le r,s\le L}\lambda(p^r)\lambda(p^s)\sum_{i=0}^{\min(r,s)}p^{\left(\frac{r+s}{2}-i\right)x}\\
&\ll \sum_{r=1}^L\sum_{s=1}^L |\lambda(p^r)\lambda(p^s)|p^{x|r-s|/2}\\
&\ll \sum_{l=0}^{L-1}p^{xl/2}\sum_{r=1}^{L-l}|\lambda(p^r)\lambda(p^{r+l})|)\\
&\ll \sum_{l=0}^{L-1}p^{xl/2}\left(\sum_{r=1}^{L-l}|\lambda(p^r)|^2\right)^{1/2}\left(\sum_{r=1}^{L-l}|\lambda(p^{r+l})|^2\right)^{1/2}\\
&\ll\sum_{r=1}^L|\lambda(p^r)|^2.
\end{align*}
If $x\ge 0$ following a similar calculation we see that the sum
\begin{align*}
&\ll\sum_{r=1}^L\sum_{s=1}^L|\lambda(p^r)\lambda(p^s)|p^{(r+s)x/2}\\
&\ll p^{xL}\left(\sum_{r=1}^L|\lambda(p^r)|\right)^2\ll p^{xL}L\left(\sum_{r=1}^L|\lambda(p^r)|^2\right),
\end{align*}•
completing the proof.
\end{proof}

\section{Revisiting B\'erard's Theorem on a Compact Quotient}
In this section we will revisit B\'erard's theorem where a logarithm improvement of the error term in Weyl's eigenvalue counting formula is established on a negatively curved, connected and compact Riemannian $n$-dimensional manifold with boundary. Most of the section follows \S$3$ of \cite{So}, and \S$3$ of \cite{BS}. Recall that $\Delta$ is the Laplace-Beltrami operator of $M$. We label the spectrum of $-\Delta$, counted with multiplicity, as $$0=\lambda_0<\lambda^2_1\leq\lambda^2_2\leq\dots.$$Suppose that $\{\phi_j\}_{j=0}^\infty$, containing $\phi$, is an orthonormal basis of $L^2(M)$, such that, $-\Delta\phi_j=\lambda_j^2\phi_j$. H\"ormander \cite{Ho} proved an asymptotic for the eigenvalue counting function: for a constant $C_M$ depending only on $M$,
\begin{equation}\label{weyl}
N(\lambda):=\#\{\lambda_j\leq\lambda\}=C_M\lambda^n+O(\lambda^{\frac{n-1}{2}}).
\end{equation}
B\'erard \cite{Be} proved that if $M$ has non-positive sectional curvature then one can improve \eqref{weyl} to 
\begin{equation} \label{berard}
N(\lambda):=\#\{\lambda_j\leq\lambda\}=C_M\lambda^n+O\left(\frac{\lambda^{\frac{n-1}{2}}}{\log\lambda}\right).
\end{equation}
We will employ the same argument as in \cite{Be} to improve the bound for the archimedean kernel. In our context, we fix $M=\Gamma\backslash\hh^2$ and $n=2$. We define the operator $E_j$ on $C^\infty(M)$ as
$$E_jf(x)=\phi_j(x)\langle f,\phi_j\rangle=\phi_j(x)\int_M f(y)\overline{\phi_j(y)}d\mu.$$We recall the standard spectral projector with $E_j$'s, which is
\begin{equation}\label{specproj}
h_\lambda f=\sum_{\lambda_j\in[\lambda,\lambda+1]}E_jf.
\end{equation}
For the purpose of the proof of \eqref{berard} we need to change the spectral window in \eqref{specproj}, which is done as follows. By $\hat{h}$ we will denote the Fourier transform of a function $h$.
Let $h\in\mathcal{S}(\R)$ be a Schwartz function such that
\begin{equation}\label{propertyh}
h(0)=1, h\ge 0, \text{ and } \hat{h}(t)=0 \text{ for } |t| \ge 1/2.
\end{equation}
Such a function exists, as one can cook up $h$ by taking the Fourier transform of the convolution of an even non-negative function $0\neq\chi\in C_c^\infty((-1/4,1/4))$ with itself. Scaling the spectral window by a factor $\epsilon >0$, which will be fixed later, we define
\begin{equation}\label{finalspecproj}
\tilde{h}^\epsilon_\lambda f=\sum_{j=0}^\infty h(\epsilon^{-1}(\lambda-\lambda_j))E_jf.
\end{equation}
We also define a remainder operator by $$r_\lambda f=\sum_{j=0}^\infty h(\epsilon^{-1}(\lambda+\lambda_j))E_jf.$$ Then by the Fourier transform we get that,
\begin{align*}
\tilde{h}^\epsilon_\lambda f+r_\lambda f
&=\frac{1}{2\pi}\sum_{j=0}^\infty\epsilon\left(\int_\R\hat{h}(\epsilon t)e^{-it\lambda_j}e^{it\lambda}dt+\int_\R\hat{h}(\epsilon t)e^{it\lambda_j}e^{it\lambda}dt\right)E_jf\\
&=\frac{1}{\pi}\int_\R\epsilon\hat{h}(\epsilon t)e^{it\lambda}\left(\sum_{j=0}^\infty\cos(t\lambda_j)E_jf\right)dt.
\end{align*}
Hence equating the kernels of these operators we rewrite \eqref{finalspecproj} as
\begin{equation}\label{ptf}
\sum_{j=0}^\infty \left(h(\epsilon^{-1}(\lambda-\lambda_j))+ h(\epsilon^{-1}(\lambda+\lambda_j))\right)\phi_j(x)\overline{\phi_j(y)} = \frac{1}{\pi}\int_\R\epsilon\hat{h}(\epsilon t)e^{it\lambda}\cos t\sqrt{-\Delta_M} (x,y) dt.
\end{equation}
Here as a kernel function of two variables one can write
$$\cos t\sqrt{-\Delta_M}(x,y)=\sum_{j=0}^\infty\cos(t\lambda_j)\phi_j(x)\overline{\phi_j(y)}.$$
Recall the covering map $p$ at $m\in M$; $p=\exp_m\colon T_mM\to M$. We pull back $f\in C^\infty(M)$ by $p$ to $\tilde{f}\in C^\infty(\hh^2)$ which is periodic with respect to $\Gamma$. Pulling the metric on $M$ back to $\hh^2$ and folding the right hand side we may rewrite the kernel as
\begin{equation}\label{folding}
(\cos t\sqrt{-\Delta_M})(\tilde{z},\tilde{w})=\sum_{\gamma\in\Gamma}(\cos t\sqrt{-\Delta_{\hh^2}})(\gamma z,w),
\end{equation}
where $p(z)=\tilde{z}$, $p(w)=\tilde{w}$ and $\Delta_{\hh^2}$ is the Laplacian on $\hh^2$. Combining with \eqref{ptf} we get that
\begin{equation}\label{fptf comp}
\sum_{j=0}^\infty h(\epsilon^{-1}(\lambda-\lambda_j))\phi_j(z)\overline{\phi_j(w)}+\sum_{j=0}^\infty h(\epsilon^{-1}(\lambda+\lambda_j))\phi_j(z)\overline{\phi_j(w)} = \sum_{\gamma\in\Gamma}K_\epsilon(\gamma z,w) dt,
\end{equation}
where
$$K_\epsilon(z,w):=\frac{1}{\pi}\int_\R\epsilon\hat{h}(\epsilon t)e^{it\lambda}\cos t\sqrt{-\Delta_{\hh^2}} (z, w).$$
This equation is the so-called pre-trace formula. By $d_g$ we will denote the usual Riemannian distance on $\hh^2$ with respect to its hyperbolic metric. We also note that if $d_g(z,w)$ is sufficiently small ($\le 3$ should be enough, see \S$3$ of \cite{BS}) then
\begin{equation}\label{huygens}
\cos(t\sqrt{-\Delta_{\hh^2}})(z,w)=\cos(t\sqrt{-\Delta_M})(\tilde{z},\tilde{w}),
\end{equation} due to Huygens principle.

\begin{lemma}\label{trivbndB}
There exists an absolute constant $C>0$ such that for $\lambda^{-1} \le \epsilon\le 1$,
$$K_\epsilon(z,w)\ll
\begin{cases}
&\lambda\epsilon+e^{C/\epsilon},\text{ if }d_g(z,w)\le 1\\
&\left(\frac{\lambda}{d_g(z,w)}\right)^{1/2}e^{C/\epsilon},\text{ if $d_g(z,w)\le\epsilon^{-1}$}; \\
&0, \text{ otherwise}.
\end{cases}$$
\end{lemma}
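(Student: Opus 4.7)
The plan is to analyze the oscillatory integral defining $K_\epsilon$ using the known structure of the wave kernel on $\hh^2$ together with the compact support of $\hat{h}$ and stationary phase in the variable $t$.

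The third case is immediate from finite propagation speed. Since $\hat h$ is supported in $[-\tfrac{1}{2},\tfrac{1}{2}]$ by \eqref{propertyh}, the amplitude $\epsilon\hat h(\epsilon t)$ vanishes outside $|t|\le 1/(2\epsilon)$. By Huygens' principle on $\hh^2$, $\cos(t\sqrt{-\Delta_{\hh^2}})(z,w)=0$ whenever $|t|<d_g(z,w)$. Together these facts force $K_\epsilon(z,w)=0$ as soon as $d_g(z,w)>1/(2\epsilon)$, which after adjusting the constant covers the case $d_g(z,w)>\epsilon^{-1}$.

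For the remaining two cases I would insert the Mehler-type formula for the wave kernel on $\hh^2$: after the standard shift $\sqrt{-\Delta_{\hh^2}-\tfrac14}$, one has the explicit expression
$$\cos\!\bigl(t\sqrt{-\Delta_{\hh^2}-\tfrac14}\bigr)(z,w)=\frac{c\,\mathbf{1}_{\{|t|>r\}}}{\sqrt{\cosh t-\cosh r}},\qquad r=d_g(z,w),$$
the passage back to $\sqrt{-\Delta_{\hh^2}}$ contributing only a smooth symbol which I absorb into the amplitude. Substituting this into $K_\epsilon$, the task reduces to bounding an oscillatory integral of the form
$$\int_{|t|\ge r}\frac{\epsilon\hat h(\epsilon t)\,e^{it\lambda}}{\sqrt{\cosh t-\cosh r}}\,dt.$$
I would split this into a near-singular window $|t-r|\le \lambda^{-1}$ (or, when $r\le 1$, the full window $|t|\le 1$) and its complement. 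On the window, a direct bound of the amplitude combined with the length of the integration yields $\lambda\epsilon$ in the diagonal regime $r\le 1$ (morally Weyl's law in an $\epsilon$-spectral window) and $(\lambda/r)^{1/2}$ in the intermediate regime $1<r\le\epsilon^{-1}$, by integrating the square-root singularity against a window of size $\lambda^{-1}$. On the complement, where the amplitude is smooth, repeated integration by parts against $e^{it\lambda}$ gives arbitrary power savings in $\lambda$; what survives is the size of the amplitude and its derivatives at the far end $|t|\sim 1/\epsilon$, which is bounded by $(\cosh(1/\epsilon)-\cosh r)^{-1/2}$ multiplied by powers of $\epsilon^{-1}$ coming from derivatives of $\hat h(\epsilon t)$. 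This produces the stated $e^{C/\epsilon}$ term.

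The main obstacle is making the stationary-phase estimate near $|t|=r$ uniform in $\lambda$, $\epsilon$ and $r$ while tracking the exponential growth of the wave-kernel amplitude across the full time window $|t|\le 1/(2\epsilon)$. This is the crux of B\'erard's argument in \cite{Be}, and is precisely the kind of oscillatory-integral estimate that has been systematised by Sogge and B\'erard--Sch\"uss on compact negatively-curved surfaces; the factor $e^{C/\epsilon}$ in Lemma \ref{trivbndB} faithfully records the cost of extending the time integration from $O(1)$ out to $1/\epsilon$, which will later be the range needed to exploit the non-archimedean amplifier $K_L$ in the pre-trace formula.
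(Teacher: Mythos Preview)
Your approach is genuinely different from the paper's. The paper does not use the explicit hyperbolic wave kernel; instead it invokes the Hadamard parametrix on $(\R^2,g)\cong\hh^2$, writing
\[
\cos(t\sqrt{-\Delta_{\hh^2}})(x,y)=\sum_{m=0}^N\alpha_m(x,y)\,\partial_t\bigl(E_m(t,d_g)-E_m(-t,d_g)\bigr)+R_N(t,x,y),
\]
where the $E_m$ are the \emph{Euclidean} radial distributions and the hyperbolic geometry sits entirely in the smooth coefficients $\alpha_m$ and the remainder $R_N$. The factor $(\lambda/r)^{1/2}$ then comes from a stationary-phase estimate on the $E_0$ term (Sogge's $(3.6.15)$), while the $e^{C/\epsilon}$ comes from the bounds $|\alpha_m|\ll e^{O(d_g(x,y))}$ and $|R_N|\ll|t|^{2N}e^{O(t)}$, which encode the exponential volume growth of $\hh^2$. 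The paper also handles the case $d_g(z,w)\le 1$ by a Cauchy--Schwarz reduction on the spectral side to the diagonal $z=w$, a step you do not mention.

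Your Mehler-formula route can be made to work, but the write-up has a genuine error. First, the displayed formula is that of $\sin(t\sqrt{-\Delta-1/4})/\sqrt{-\Delta-1/4}$, not of the cosine; one $\partial_t$ (equivalently, an integration by parts bringing down $i\lambda$ from $e^{it\lambda}$) is missing. More importantly, your identification of the source of $e^{C/\epsilon}$ is incorrect: the amplitude $(\cosh t-\cosh r)^{-1/2}$ and all of its $t$-derivatives are exponentially \emph{decaying} on $|t|\sim 1/\epsilon$, since each differentiation introduces a factor $\sinh t/(\cosh t-\cosh r)=O(1)$; the far-end contribution is $O(e^{-c/\epsilon})$, not $O(e^{C/\epsilon})$. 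There is no ``exponential growth of the wave-kernel amplitude'' in the explicit $\hh^2$ formula---that growth is precisely what the parametrix coefficients $\alpha_m$ carry, and you have bypassed them. Carried out carefully, your method should in fact give a bound of order $\epsilon\lambda^{1/2}/\sqrt{\sinh r}$ for $r\ge 1$, which is stronger than the stated lemma and would make the $e^{C/\epsilon}$ term (and hence the later choice of the constant $c$ in the proof of Theorem~\ref{theorem1}) unnecessary. So the strategy is viable and even advantageous, but you are forcing an incorrect mechanism to reproduce a weaker bound than your approach naturally delivers.
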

Most of the proof of this Lemma already exists in the proof of Proposition $3.6.2$ of \cite{So}, however not as explicit as in the statement of this Lemma. We provide a complete proof using the ingredients which have been developed in the proof of Proposition $3.6.2$ of \cite{So} (we also refer $\S3$ of \cite{BS}) for the convenience of the reader.
\begin{proof}
We claim the following:
there exists an absolute constant $C>0$ such that for $\lambda^{-1} \le \epsilon\le 1$,
\begin{equation}\label{mainclaim}
K_\epsilon(z,w)\ll
\begin{cases}
&\lambda\epsilon+e^{C/\epsilon}\text{ if $z=w$};\\
&\left(\frac{\lambda}{d_g(z,w)}\right)^{1/2}e^{C/\epsilon},\text{ if $0<d_g(z,w)\le\epsilon^{-1}$}\\
&0,\text{ otherwise}.
\end{cases}
\end{equation}
We will first see that the claim \eqref{mainclaim} is sufficient for the proof.
Note that $h\ge 0$. We define
$$X_j(z):=\sum_{j=0}^\infty \sqrt{(h(\epsilon^{-1}(\lambda-\lambda_j))+h(\epsilon^{-1}(\lambda+\lambda_j)))}\phi_j(z).$$
Let us fix $z,w\in \hh^2$ with $d_g(z,w)\le 1$. Then using \eqref{huygens} and Cauchy-Schwarz we get that
\begin{align*}
&K_\epsilon(z,w)=\int_\R\epsilon\hat{h}(\epsilon t)e^{it\lambda}\cos t\sqrt{-\Delta_{M}}(\tilde{z},\tilde{w})dt\\
&=\left|\sum_{j=0}^\infty X_j(\tilde{z})\overline{X_j(\tilde{w})}\right| 
\le \left(\sum_{j=0}^\infty |X_j(\tilde{z})|^2\right)^{1/2}\left(\sum_{j=0}^\infty |X_j(\tilde{w})|^2\right)^{1/2}
\le\sup_z K_\epsilon(z,z).
\end{align*}
Thus from the first estimate of \eqref{mainclaim} we conclude the first estimate of Lemma \ref{trivbndB}. Rest of the estimates of Lemma \ref{trivbndB} are same as in the claim \eqref{mainclaim}.

Now to prove the claim \eqref{mainclaim} we follow as in the proof of Proposition $3.6.2$ of \cite{So}. Among the three estimates of \eqref{mainclaim} proofs of first two estimates are similar. The first estimate, however, has been proved exactly as stated in \eqref{mainclaim}, in the proof of $(3.6.8)$ of \cite{So}. So we focus on showing the second estimate of \eqref{mainclaim}. 

We may prove the lemma using theory of Hadamard's parametrix for the solution of the Cauchy problem. For details about parametrix we refer to $\S2.4$ of \cite{So}. We consider the kernel function on $(\R^2,g)$ instead of the upper half plane, where $g$ is the metric that makes $(\R^2,g)$ isometric to $\hh^2$. From $(3.6.10)$ and $(1.2.30)$ of \cite{So} we write 
$$\cos(t\sqrt{-\Delta_{\hh^2}})(x,y)=\sum_{m=0}^N\alpha_m(x,y)\partial_t(E_m(t,d_g(x,y))-E_m(-t,d_g(x,y)))+R_N(t,x,y),$$
where $E_m$ are radial tempered distributions supported on $\{(t,x)\in[0,\infty)\times\R^2\mid d_g(x,0)\le t\}$, defined by (as $E_m$'s are radial in second variable, by abuse of notation, we are identifying the second entry of $E_m$ as the distance from origin)
\begin{align*}
E_m(t,x)&\equiv E_m(t,d_g(x,0))\\
&=\lim_{\epsilon\to 0_+}m!(2\pi)^{-3}\int_\R\int_{\R^{2}}e^{i\langle x,\xi\rangle+it\tau}(d_g(\xi,0)^2-(\tau-i\epsilon)^2)^{-m-1}d\xi d\tau.
\end{align*}
From $(3.6.10)-(3.6.13)$ and Theorem $3.1.5$ of \cite{So} we obtain following properties of the coefficients $\alpha_m$ and the remainder term $R_N$:
$\alpha_m$ are smooth functions on $M\times M$ with 
\begin{equation}\label{estimatealpha}
0<\alpha_0(x,y)\le 1, \alpha_0(x,x)=1\text{ and }|\partial_{x,y}^l\alpha_m|\ll e^{O(d_g(x,y))},\quad |l|, m = O(1),
\end{equation}
and there is a $\delta>0$ such that 
$$R_N\in \mathcal{C}^{N-5}([-\delta,\delta]\times M\times M)\text{ with }|\partial^\alpha_{t,x,y}R_N|\ll |t|^{2N-|\alpha|}e^{O(t)},\quad N\ge 5.$$
From the estimate of $R_N$ we conclude that
\begin{equation}\label{estimateR}
\int_\R\epsilon\hat{h}(\epsilon t)e^{it\lambda}R_N(t, x,y)dt \ll e^{O(\epsilon^{-1})}.
\end{equation}
Now from $(3.6.15)$ (and its proof which also works for $r>0$) we get that
\begin{equation}\label{estimatehint}
\int_\R\hat{h}(\epsilon t)e^{it\lambda}(\partial_tE_m(t,r)-\partial_tE_m(-t,r))dt\ll (\lambda/r)^{1/2-m}.
\end{equation}
Note that \eqref{estimatehint}, along with \eqref{estimatealpha} for $d_g(x,y)\le \epsilon^{-1}$ and \eqref{estimateR}, proves the second estimate of \eqref{mainclaim}. To see that we write
\begin{align*}
K_\epsilon(z,w)
&\ll \sum_{m=0}^N|\alpha_m(x,y)|\left|\int_\R\hat{h}(\epsilon t)e^{it\lambda}\partial_t(E_m(t,d_g(x,y))-E_m(-t,d_g(x,y)))\right|dt\\
&\hphantom{estimateestimate}+\left|\int_\R\epsilon\hat{h}(\epsilon t)e^{it\lambda}R_N(t,x,y)\right|dt\\
&\ll e^{C\epsilon^{-1}}\sum_{m=0}^N (\lambda/d_g(x,y))^{1/2-m}+e^{C\epsilon^{-1}},
\end{align*}
for some absolute constant $C$ and $\epsilon\le\lambda^{-1}$. This completes proof of the second estimate.

The third inequality of \eqref{mainclaim} is follows from the support of $\hat{h}$ as described in \eqref{propertyh} and the fact that the Cauchy kernel $\cos(t\sqrt{-\Delta_{\hh^2}})(x,y)$ vanishes when $d_g(x,y)> t$ (for details see discussion in p. $46$ of \cite{So}).
\end{proof}

\section{Counting and Bounds}
\subsection{Estimating Hecke Returns}
We recall the definition of standard point-pair invariant. Thinking of the upper half plane as $\SL_2(\R)/\mathrm{SO}(2)$, we recognize a point $x+iy=z\in \hh^2$ as $gK$, where $K=\mathrm{SO}(2)$. From the Iwasawa decomposition we write $g=\begin{pmatrix}
\sqrt{y}&x\sqrt{y^{-1}}\\&\sqrt{y^{-1}}
\end{pmatrix}K$. We define a point-pair invariant by
$$u(z,i)=\frac{x^2+(y-1)^2}{4y}.$$
One may also note that $u(z,w)=\sinh^2(d_g(z,w)/2)$.
Fix $z\in M$. For $t>0$ an $N\in\mathbb{N}$ we define a counting function $$M(N,t;z)=\#\{\gamma\in R(N)\mid u(\gamma z,z)<t\}.$$

\begin{lemma}\label{heart}
For any $t>1$ $$M(N,t;z)\ll t N^{2}.$$
Also there exists a $\delta_N>0$ such that $M(N,\delta_N;z)\ll_\nu N^\nu$ for all $\nu>0$ small enough.
\end{lemma}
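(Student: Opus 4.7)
The plan is two-part: a direct lattice count for the first bound, and an arithmetic reduction for the second. After conjugating by a hyperbolic isometry (absorbed into the $z$-dependent implied constants), I may assume $z=i$. Using the elementary identity $(ac+bd)^2+(ad-bc)^2=(a^2+b^2)(c^2+d^2)$, one computes
\[
u(\gamma\cdot i,\,i)=\frac{\|\tau(\gamma)\|_F^2-2N}{4N},\qquad \gamma\in R(N),
\]
where $\|\cdot\|_F$ denotes the Frobenius norm on $M_2(\R)$. Hence $u(\gamma\cdot i,i)<t$ (with $t\ge 1$) forces $\|\tau(\gamma)\|_F^2\ll Nt$. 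The first bound then follows from counting lattice points of the rank-$4$ $\Z$-lattice $\tau(\OO)\subset M_2(\R)$ in a ball of radius $\ll\sqrt{Nt}$: the crude volume count gives $\ll (Nt)^2$, while the Hecke coset decomposition $R(N)=\sqcup R(1)\gamma_i$ (with $|R(1)\backslash R(N)|\ll N^{1+\epsilon}$) combined with a standard hyperbolic lattice point count in a ball of area $\asymp t$ yields the sharper $\ll N^{1+\epsilon}t$. Either estimate is $\ll N^2 t$.

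For the second bound, the essential observation is that the elements of $R(N)$ which fix $z$ exactly lie in the sub-order $\OO_z:=\OO\cap\tau^{-1}\bigl(\R\cdot\mathrm{Stab}_{\SL_2(\R)}(z)\bigr)$. The subspace $\R\cdot\mathrm{Stab}_{\SL_2(\R)}(z)$ is a $2$-dimensional $\R$-subalgebra of $M_2(\R)$ (isomorphic to $\C$), so $\OO_z$ is a $\Z$-order in a $\Q$-subalgebra of $\HH$. For generic $z$ this subalgebra is just $\Q$, so $\OO_z=\Z$ and the norm-$N$ elements of $\OO_z$ number at most $2$; for CM points $z$ it is a quadratic imaginary subfield of $\HH$, and the number of elements of $\OO_z$ of reduced norm $N$ is bounded by the divisor function $\ll_\nu N^\nu$, by the standard count of integral ideals of norm $N$ in a quadratic order.

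To extract $\delta_N$, write via the coset decomposition
\[
M(N,\delta;z)=\sum_i\#\{\delta_0\in\Gamma\colon \delta_0\gamma_i\cdot z\in B(z,r_\delta)\},\qquad r_\delta\asymp\sqrt{\delta}.
\]
A coset with $\gamma_i\cdot z\notin\Gamma\cdot z$ contributes zero once $r_\delta$ is smaller than the minimum hyperbolic distance from $z$ to $\{\gamma_i\cdot z\}\setminus\Gamma\cdot z$ inside a fixed bounded ball around $z$; this minimum is strictly positive because the set of Hecke points in a bounded region is finite (by the first bound). Choosing $\delta_N$ below this threshold, only the $z$-fixing cosets survive, and these are in bijection with $(\OO_z\cap R(N))/\OO_z^\times$ up to the bounded multiplicity $|\Gamma_z|$, giving $M(N,\delta_N;z)\ll_\nu N^\nu$. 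The main obstacle is just the non-quantitative existence of the gap, which is automatic from discreteness of $\OO$; making $\delta_N$ effectively a negative power of $N$ would require a Diophantine argument controlling how closely a norm-$N$ lattice point can approach $\R\cdot\mathrm{Stab}_{\SL_2(\R)}(z)$ without lying on it, but this strengthening is not needed for the stated bound.
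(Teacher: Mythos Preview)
Your argument is essentially correct for the lemma as stated, but it differs substantially from the paper's proof and, crucially, produces a weaker output than what the paper actually needs downstream.

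\textbf{Comparison with the paper.} The paper does not argue from scratch: it simply invokes Lemma~1.3 of Iwaniec--Sarnak, which gives the uniform quantitative bound
\[
M(N,t;z)\ll\bigl((t+t^{1/4})N+1\bigr)N^\nu
\]
for all $t>0$. Both assertions of the lemma are immediate from this: the first by taking $t>1$, the second by setting $\delta_N=N^{-4}$ explicitly. Your route is more self-contained---a direct lattice/coset count for large $t$, and a discreteness-plus-CM argument for small $t$---and it illuminates \emph{why} the small-ball count is governed by a divisor bound. That conceptual gain is real.

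\textbf{One small slip.} Your crude volume count in the rank-$4$ lattice gives $\ll(Nt)^2=N^2t^2$, not $N^2t$; so ``either estimate is $\ll N^2 t$'' is false for the crude count when $t$ is large. Only your second argument (Hecke cosets $\ll N^{1+\epsilon}$ times a hyperbolic lattice count $\ll t$) actually yields the stated bound.

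\textbf{The genuine issue: effectivity of $\delta_N$.} Your $\delta_N$ is produced by a pure discreteness argument (the positive gap between $z$ and the finitely many Hecke points near it), and you correctly flag that it is non-quantitative and $z$-dependent. For the lemma \emph{as worded} this is enough. But in the proof of Theorem~\ref{theorem1} the paper inserts $\delta=(\sqrt{mn}/d)^{-8}$ into the archimedean estimate and uses $\log(1+2\delta)^{-1/2}\ll\delta^{-1/2}=(\sqrt{mn}/d)^{4}$ explicitly; this polynomial dependence on $N=mn/d^2$ is then absorbed by Lemma~\ref{technical}. A non-effective or $z$-dependent $\delta_N$ would break that step. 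So while your proof establishes the lemma, it could not be substituted into the paper without also supplying the Diophantine strengthening you mention---which is exactly what the Iwaniec--Sarnak input provides.
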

\begin{proof}
This is direct from Lemma $1.3$ of \cite{IS} which says that for any $t> 0$
$$M(N,t;z)\ll ((t+t^{1/4})N+1)N^\nu.$$
which immediately implies the first claim. For
\begin{equation}\label{choosedelta}
t=\delta_N=N^{-4},
\end{equation}
the second claim is also true.
\end{proof}

\subsection{Final Bound}
In this section we will fix an $h$ as in  \eqref{propertyh} and $$\epsilon=c(\log\lambda)^{-1}$$ in Lemma \ref{trivbndB} where $c>C$ is a constant to be fixed later. We will also drop the subscript $\epsilon$ from $K_\epsilon$.
 
\begin{proof}[Proof of Theorem \ref{theorem1}] We choose an orthonormal Hecke-eigenbasis containing $\phi$ in the pre-trace formula in \eqref{fptf comp}. We apply our amplifier $K_L$ in \eqref{amplifier} to \eqref{fptf comp} in the $z$ variable and then plug-in $z=w$. Using \eqref{propertyh} and Lemma \ref{amplification} and thus using non-negativity of each term of the left hand side of amplified pre-trace formula we deduce that
\begin{equation}\label{amplified ptf}
A_L^2|\phi(z)|^2\ll \sum_{m,n\in \M(\P,L)}\sum_{d\mid (m,n)}\frac{d}{\sqrt{mn}}\left|\lambda_\phi(m)\lambda_\phi(n)\sum_{\gamma\in R(mn/d^2)} K(\gamma z,z)\right|,
\end{equation}
where $$A_L:=\prod_{p\in\P}\sum_{n=1}^L |\lambda_\phi(p^n)|^2$$ and thus from Lemma \ref{naiveamplification} $A_L\gg L^{|\P|}$. 

Recall that $u=\sinh^2(d_g/2)$. We note that 
$$u >\lambda^{1/c}\implies d_g>\frac{\log \lambda}{c}=\epsilon^{-1}.$$
Thus from Lemma \ref{trivbndB} we get that 
$$K(\gamma z, z)=0 \text{ if } u(\gamma z ,z)>\lambda^{1/c},$$
and so for any natural number $N$ and small enough $\delta>0$,
\begin{align}\label{splitting}
\begin{split}
&\sum_{\gamma\in R(N)}K(\gamma z,z)= \sum_{\substack{{\gamma\in R(N)}\\ {u(\gamma z,z)<\delta}}}K(\gamma z, z)+\sum_{\substack{{\gamma\in R(N)}\\ {\delta\le u(\gamma z,z)\le\lambda^{1/c}}}}K(\gamma z, z)\\
& \ll \frac{\lambda}{\log \lambda}M(N,\delta;z)+\left(\frac{\lambda}{\log(1+2\delta)}\right)^{1/2}\lambda^{C/c}M(N,\lambda^{1/c};z).\\
\end{split}
\end{align}
We have used Lemma \ref{trivbndB} in the last line.
Now we apply Lemma \ref{heart} with $\nu=1/10$ and $\delta=(\sqrt{mn}/d)^{-8}$, and $N=mn/d^2$. From the chosen values of the parameters we obtain estimates that
$$\frac{\lambda^{1/2}}{\log(1+2\delta)^{1/2}}\ll \lambda^{1/2}\delta^{-1/2}=\lambda^{1/2}\left(\frac{\sqrt{mn}}{d}\right)^4;$$
and from \eqref{choosedelta}
$$M(mn/d^2,\delta;z)\ll \left(\frac{\sqrt{mn}}{d}\right)^{1/5};\quad M(mn/d^2,\lambda^{1/c};z)\ll\lambda^{1/c}\left(\frac{\sqrt{mn}}{d}\right)^4.$$
Now we combine \eqref{amplified ptf} and \eqref{splitting}, and use $\delta=(\sqrt{mn}/d)^{-8}$ in the $d$th summand of \eqref{amplified ptf}, along with above estimates. Thus we obtain that
\begin{align*}
&A_L^2|\phi(z)|^2\\
&\ll \sum_{m,n\in\M(\P,L)}\sum_{d\mid (m,n)}|\lambda_\phi(m)\lambda_\phi(n)\frac{d}{\sqrt{mn}}|\left(\frac{\lambda}{\log\lambda}\left(\frac{\sqrt{mn}}{d}\right)^{1/5}+\lambda^{1/2+C/c}\left(\frac{\sqrt{mn}}{d}\right)^{4}\lambda^{1/c}\left(\frac{\sqrt{mn}}{d}\right)^4\right)\\
&= \frac{\lambda}{\log\lambda}\sum_{m,n\in\M(\P,L)}|\lambda_\phi(m)\lambda_\phi(n)|\sum_{d\mid (m,n)}\left(\frac{\sqrt{mn}}{d}\right)^{-4/5}\\
&\hphantom{supnormsupnormsupnorm}+\lambda^{1/2+(1+C)/c}\sum_{m,n\in \M(\P,L)}|\lambda_\phi(m)\lambda_\phi(n)|\sum_{d\mid (m,n)}\left(\frac{\sqrt{mn}}{d}\right)^{7}\\
&\ll \frac{\lambda}{\log\lambda}A_L +\lambda^{1/2+(C+1)/c}L^{|\P|}A_L\prod_{p\in \P}p^{7L}.
\end{align*}
We have used Lemma \ref{technical} in the last estimate. Choosing $L=\frac{\log\lambda}{100\log \prod_{p\in\P}p}$ and $c=4C+4$ we conclude that
$$|\phi(z)|^2\ll_\epsilon \frac{\lambda}{(\log\lambda)^{|\P|+1}}+\lambda^{3/4+7/100+\epsilon} \ll \frac{\lambda}{(\log\lambda)^{|\P|+1}},$$which gives the result.
\end{proof}

\subsection{Remark}
One can ask about the optimality of the method; for e.g. whether one would be able to achieve a power saving using linear amplifier with finitely many primes. We give an argument why power of logarithm saving should be optimal.
One can understand from the proof that if we use an amplifier of the form $$(\sum_m \alpha_mT(p^m))^2$$ (the definition of $T(n)$ is in \S $2$) then the geometric side of the amplified pre-trace formula is \textit{of the size} of
$$\sqrt{\frac{\lambda}{\log\lambda}}\times\sum|\alpha_m|^2.$$
Hence to get a saving from \eqref{startB} we need $\frac{(\sum_m\alpha_m\lambda_\phi(p^m))^2}{\sum_m|\alpha_m|^2}$ to be as big as possible, where $\lambda_\phi(p^m)$ is the eigenvalue of $\phi$ under $T(p^m)$. By Cauchy-Schwarz this quotient is the largest when $\alpha_m=\lambda_\phi(p^m)$. On the other hand, the size of $\sum_{m\le L} |\lambda_\phi(p^m)|^2$ is comparable to $L$, at least when $\phi$ is tempered at $p$ such that the saving is restricted to at most a power of logarithm of $\lambda$. 

\begin{ackno} We want to thank Lior Silberman whose supervision during the author's masters thesis helped to start this project and also for encouragement. We  want to thank Paul Nelson for teaching us the amplification method he used in \cite{N} and numerous helpful suggestions on an earlier draft of this paper. We also want to thank Djordje Mili\'cevi\'c, Gergely Harcos, Kevin Nowland, Felix Dr\"axler, and the anonymous referee for several helpful feedbacks, comments, and suggestions on various aspects of the work.
\end{ackno}

\end{document}